
\documentclass[12pt]{amsart}
\usepackage{amsmath,amsthm, amssymb}
\usepackage{graphicx,epic}
\usepackage{mathrsfs}
\usepackage{hyperref}
\usepackage{float}
\usepackage{color}
\usepackage{xcolor}

\textheight=215mm
\topmargin=5mm
\oddsidemargin=10mm
\evensidemargin=10mm
\textwidth=150mm
\parindent=0cm
\parskip=3mm

\newtheorem{theorem}{Theorem}
\newtheorem{proposition}{Proposition}[section]

\newtheorem{lemma}[proposition]{Lemma}

\newtheorem{definition}[proposition]{Definition}

\newtheorem*{Acknowledgements}{Acknowledgements}

\def\ie{{\em i.e.,\ }}
\def\eg{{\em e.g.\ }}
\def\eps{\varepsilon}

\def\R{{\mathbb R}}


\newcommand {\CC}{{\mathcal C}}

\newcommand {\CE}{{\mathcal E}}

\newcommand {\CP}{{\mathcal P}}

\def\Leb{{\hbox{{\rm Leb}}}}

\def\s{\sigma}

\def\1{ {\hbox{{\it 1}} \!\! I} }

\def\al{\alpha}

\def\La{\Lambda}
\def\la{\lambda}

\def\8{\infty}

\def\disp{}

\bibliographystyle{plain}

\usepackage{float,color}

\newcommand{\ninf}{{n\to+\8}}
\newcommand{\mv}{{\mathbf{v}}}
\newcommand{\mw}{{\mathbf{w}}}

\theoremstyle{definition}
\newtheorem{remark}{Remark}

\begin{document}
\synctex =1
\title[SRB measures]
{SRB measures for higher dimensional singular partially hyperbolic attractors}
\author{Renaud Leplaideur and Dawei Yang}
\date{Version of \today}
\thanks{}

\subjclass[2010]{37A35, 37A60, 37D20, 37D35, 47N10} 
\keywords{partially hyperbolic singular flows, thermodynamical formalism, equilibrium states, SRB and physical measures}
\thanks{RL would like to thank Soochow University for its support and hospitality. DY would like to thank the support of NSFC 11271152 and A Project Funded by the Priority Academic Program Development of Jiangsu Higher Education Institutions(PAPD)}

\maketitle

\begin{abstract}
We prove the existence and the uniqueness of the SRB measure for any singular  hyperbolic attractor in dimension $d\ge 3$. The proof does not use Poincar\'e sectional maps, but uses basic properties of thermodynamical formalism. 
\end{abstract}

\section{Introduction}\label{sec:intro}

\subsection{Background}

Our aim is to prove that any finite dimensional singular partially hyperbolic attractor admits a unique SRB measure.

SRB stands for Sinai, Ruelle and Bowen who established  in the 70's the theory of  thermodynamical formalism and proved the existence and the uniqueness of some special measures for Anosov systems and Axiom A attractors (diffeomorphisms or flows), see \eg \cite{Sin72,BoR75,Rue76}. 


SRB measures usually satisfy the two following properties. On the one hand their disintegrations along unstable manifolds are absolutely continuous with respect to the Lebesgue measures on these manifolds. On the other hand, and at least for the hyperbolic case,  SRB measures are expected to be {\em physical measures}, i.e.,  their sets of generic points have positive Lebesgue measure on the manifold (see \eg \cite{young-srb} for a survey).

Since Sinai, Ruelle and Bowen, it has remained a challenge to export the SRB theory to other dynamical systems presenting a weaker form of hyperbolicity. Among them, the Lorenz-like attractors represent one of the most famous families. First introduced by Lorenz in \cite{Lorenz}, this class has several typical properties of chaotic dynamics: it is robust in the $C^{1}$-topology, every ergodic invariant measure is hyperbolic (see Lemma \ref{Lem:measure-hyperbolic}) but  the attractors themselves are not  hyperbolic.

For the 3-dimensional case,  Guckenheimer  {was inspired by} the example introduced by Lorenz to define  in \cite{Guc76} the notion of geometric Lorenz attractor. He asked about the existence of SRB measures for these attractors. In \cite{MPP04}, 
 Morales Pacifico and Pujals  used the notion of \emph{singular hyperbolicity} to characterize more general Lorenz-like dynamics.  
 In \cite{A-V09}, Araujo-Pacifico-Pujals-Viana proved the existence and the uniqueness of SRB measures for \emph{three-dimensional singular hyperbolic attractors}. We also mention other related works:\cite{Col02,ArP07}.

 Higher-dimensional singular hyperbolic attractors have been defined in \cite{ZGW08} and \cite{MeM08}. It was thus a natural question to investigate the existence and the uniqueness of the SRB measure for these attractors. In this paper we actually prove  the existence and the uniqueness of such a measure.

We emphasize that the method from \cite{A-V09} cannot be adapted to the higher dimensional case. Indeed, their main idea is to construct a sequence of cross-sections and consider the return maps among these sections. Then, they quotient the dynamics to piecewise expanding maps of intervals and use the existence of absolutely continuous invariant measures for these maps. That idea cannot be directly adapted to the higher-dimensional case for the following two reasons. On the one hand, it is much more difficult to construct cross sections,  and on the other hand, the existence of the absolutely continuous invariant measure for general higher-dimensional piecewise expanding maps may fail (see \cite{Tsujii, Buzzi}).

Our strategy is different. We show here that basic properties of thermodynamical formalism allow one to show the existence and the uniqueness of SRB measures. Actually, a close strategy has already been investigated in \cite{CoY05} where Cowieson and Young obtained SRB measures by using the Pesin entropy formula for diffeomorphisms beyond uniform hyperbolicity. 

After we had posted the paper on ArXiV, we had some communication with M. Viana, and learnt that he and J. Yang had a work in progress on that topic. We were not aware on that work, which did not exist as a preprint but as a recorded talk (see \cite{Yan-video}). We agreed with them to mention this video. In our mind, the fact that we independently got similar ideas emphasizes how natural this strategy is.

Finally, we want to emphasize that our strategy deeply uses the absolute continuity of the strong stable foliation, which has been a very important research tool for years (see \cite{Pes04}).

\subsection{Settings and the statement of the result}

Let $X$ be a vector field on a $d$-dimensional manifold $M$, and $\varphi_t$ be the flow generated by $X$. We recall that a compact invariant set $\Lambda$ is called a topological attractor if
\begin{itemize}

\item there is an open neighborhood $U$ of $\Lambda$ such that $\cap_{t\ge 0}\varphi_t({\overline U})=\Lambda$,

\item $\Lambda$ is transitive, i.e., there is a point $x\in\Lambda$ with a dense forward-orbit. 

\end{itemize}

A compact invariant set $\Lambda$ is called a \emph{singular hyperbolic attractor} (see  \cite{MeM08,ZGW08}) if it is a topological attractor, with at least one \emph{singularity} $\sigma$, which means that $\sigma$ satisfies  $X(\sigma)=0$.
Moreover, there is a continuous invariant splitting $T_\Lambda M=E^{ss}\oplus F^{cu}$ of ${\rm D}\varphi_t$ together with constants $C>0$ and $\lambda>0$ such that
\begin{itemize}


\item Domination: for any $x\in\Lambda$ and any $t>0$, $\|{\rm D}\varphi_t|_{E^{ss}(x)}\|\|{\rm D\varphi_{-t}}|_{F^{cu}(\varphi_t(x))}\|\le C{\rm e}^{-\lambda t}$.

\item Contraction: for any $x\in\Lambda$ and any $t>0$, $\|{\rm D}\varphi_t|_{E^{ss}(x)}\|\le C{\rm e}^{-\lambda t}$.

\item Sectional expansion: for any $x$, {$F^{cu}(x)$ contains two non-collinear vectors}, and any $t>0$, for every pair of non-collinear vectors $\mv$ and $\mw$ in $F^{cu}(x)$, 
$|\det {\rm D}{\varphi_{t}}|_{{\rm span}<\mv,\mw>}|\ge C{\rm e}^{\lambda{t}}.$

\end{itemize}

We  emphasize that one of the difficulties to study these attractors is that  the singularity may belong  to the attractor  and may be accumulated by recurrent regular orbits. {Since the uniformly hyperbolic case has already been well-understood since \cite{BoR75}, we assume that the attractor does contain a singularity.}

We recall that entropy for a flow is defined as being the entropy of the time-1 map $f:=\varphi_{1}$. 

\begin{definition}\label{def-srb}
An invariant measure for the flow is said to be an SRB measure if it is an SRB measure for the time-one map $f$ of the flow, that is:
\begin{enumerate}
\item it has a positive Lyapunov exponent almost everywhere,
\item  the conditional measures on unstable manifolds are absolutely continuous w.r.t. Lebesgue measures on unstable manifolds.
\end{enumerate}
\end{definition}
We refer the reader to the survey \cite{young-srb}  on SRB measures for more details. Ledrappier-Young \cite{LeY85} proved that $\mu$ is an SRB measure if and only if $\mu$ has a positive Lyapunov exponent almost everywhere and the entropy of $\mu$ is the integration of the sum of all positive Lyapunov exponents.

The equality is called \emph{the Pesin entropy formula} and is the heart of our strategy. 

The goal of this paper is to prove the following theorem.

\begin{theorem}\label{th-main}
Let $\Lambda$ be a singular hyperbolic attractor of a {$\CC^{2}$} vector field $X$ of \emph{any} manifold $M$ {of any dimension}. Then there is a unique SRB measure $\mu$ supported on $\Lambda$. Moreover
$\mu$ is ergodic and physical.
\end{theorem}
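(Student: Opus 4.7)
The plan is to realize $\mu$ as a weak-$*$ accumulation point of time averages of Lebesgue measure on a small center-unstable disk, and then to verify the Pesin entropy formula
\[
h_{\mu}(\varphi_{1})=\int\sum_{\chi_i(x)>0}\chi_i(x)\,d\mu(x),
\]
which by the Ledrappier--Young theorem is equivalent to the SRB property. Uniqueness, ergodicity and the physical property will then follow from the absolute continuity of the strong stable foliation together with transitivity of $\La$.

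First I would choose a small $cu$-plaque $D$ tangent to the cone field of $F^{cu}$, normalize Lebesgue measure on $D$ to a probability $\Leb_{D}$, and form $\mu_{T}:=\frac{1}{T}\int_{0}^{T}(\varphi_{t})_{*}\Leb_{D}\,dt$. Any weak-$*$ limit $\mu$ is $\varphi_{t}$-invariant and supported in $\La$. The sectional expansion bound $|\det D\varphi_{t}|_{F^{cu}(x)}|\ge Ce^{\la t}$ together with the standard bounded-distortion argument on $cu$-plaques shows that the push-forwards $(\varphi_{t})_{*}\Leb_{D}$ disintegrate along $cu$-plaques with densities uniformly comparable to Lebesgue. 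These estimates pass to the time average $\mu_{T}$ and to the weak-$*$ limit, so $\mu$ already admits absolutely continuous conditionals along $F^{cu}$.

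The main obstacle is the presence of the singularity $\sigma\in\La$: a priori some mass could escape to $\sigma$ in the limit, and $\delta_{\sigma}$ has zero metric entropy while producing a positive value of $\int\log|\det D\varphi_{1}|_{F^{cu}}|$, which would break Pesin's identity. I would control this with a Pliss-type argument on the $cu$-Jacobian cocycle: sectional expansion implies that a Lebesgue-typical point of $D$ cannot spend more than a bounded fraction of its orbit in a small neighborhood of $\sigma$, so in the limit $\mu(\{\sigma\})<1$ and the continuous potential $\phi(x)=\log|\det D\varphi_{1}|_{F^{cu}(x)}|$ can be integrated against $\mu$ without loss. Combined with upper semicontinuity of the entropy map on measures keeping positive distance from $\sigma$, this yields $h_{\mu}(\varphi_{1})\ge\int\phi\,d\mu$. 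Ruelle's inequality provides the reverse bound, since by the splitting $E^{ss}\oplus F^{cu}$ the positive Lyapunov exponents of any ergodic component of $\mu$ are exactly those along $F^{cu}$. Equality in Pesin's formula therefore holds and $\mu$ is an SRB measure.

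For uniqueness, ergodicity and the physical property I would run a Hopf-style argument. Absolute continuity of the strong stable foliation $\CW^{ss}$, available in the $\CC^{2}$ partially hyperbolic setting (see \cite{Pes04}), implies that the $\mu$-generic set is $\CW^{ss}$-saturated up to Lebesgue-null sets, hence has positive Lebesgue measure in a neighborhood of $\La$, giving the physical statement. If $\mu_{1},\mu_{2}$ were two distinct ergodic SRB measures on $\La$, their basins would be disjoint, $\CW^{ss}$-saturated and $F^{cu}$-saturated subsets of positive Lebesgue measure inside a trapping region; transitivity of $\La$ combined with the local product structure furnished by the splitting $E^{ss}\oplus F^{cu}$ would force these saturated sets to intersect, a contradiction. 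Hence $\mu$ is the unique SRB measure and is ergodic, completing the proof.
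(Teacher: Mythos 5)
Your overall strategy (push forward normalized Lebesgue on a $cu$-disk, extract a weak-$*$ limit, verify Pesin's formula, then run a Hopf argument) is a legitimate alternative in spirit to the paper's route, which instead computes the topological pressure $\CP(-\log|J^{cu}|)=0$ by a Bowen-type volume lemma and takes an equilibrium state. But as written your argument has two genuine gaps at its core. First, the claim that $(\varphi_t)_*\Leb_D$ has ``densities uniformly comparable to Lebesgue'' on $cu$-plaques does not follow from a ``standard bounded-distortion argument'': $F^{cu}$ contains the neutral flow direction, so pieces of the pushed disk are not backward-contracted in all directions, and near the singularity the iterates of $D$ are cut into pieces whose intermediate diameters are not summable. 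The paper only obtains distortion control of the form $e^{nC_\eps}$ with $C_\eps\to0$ (see \eqref{eq1-disto1shoot}), which is sub-exponential but unbounded; this is precisely why the pressure formulation is used, since sub-exponential errors disappear after taking $\frac1n\log$ and letting $\eps\to0$, whereas they would destroy a direct absolute-continuity claim for the limit conditionals. Second, the entropy lower bound $h_\mu(\varphi_1)\ge\int\phi\,d\mu$ is the heart of the matter and your stated ingredients do not deliver it: a Pliss-type bound on the time spent near $\sigma$ produces no entropy, and ``upper semicontinuity of the entropy map on measures keeping positive distance from $\sigma$'' is not applicable because neither $\mu_T$ nor the limit $\mu$ keeps positive distance from $\sigma$. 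One needs upper semicontinuity of $\nu\mapsto h_\nu(f)$ at $\mu$ itself --- in the paper this is supplied by J.~Yang's entropy-expansiveness theorem \cite{Yan14} (Lemma~\ref{Lem:usc}) --- together with an actual counting/volume estimate as in Proposition~\ref{prop-lowerboundpress}. Note also that $\mu(\{\sigma\})<1$ is not sufficient: if $\mu=\alpha\delta_\sigma+(1-\alpha)\nu$ with $\alpha\in(0,1)$, then $h_\mu=(1-\alpha)h_\nu\le(1-\alpha)\int\phi\,d\nu$, while $\int\phi\,d\delta_\sigma>0$ by sectional expansion, so Pesin's identity already fails for $\mu$.

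A smaller but real error occurs in the uniqueness step: the basin of an ergodic SRB measure is $W^{ss}$-saturated, but it is \emph{not} $F^{cu}$-saturated; it only has full $\Leb^{u}$-measure on $\mu$-almost every Pesin unstable leaf, not on every leaf. The paper's argument (Lemmas~\ref{lem-muuequivleb} and~\ref{lem-srb-phys}) instead upgrades absolute continuity of the conditionals to \emph{equivalence} with $\Leb^{u}$, thickens a single generic unstable plaque by the flow and by local strong stable manifolds to produce a nonempty open set consisting Lebesgue-a.e.\ of forward generic points, and only then invokes transitivity. Your Hopf-style sketch can be repaired along these lines, but the $F^{cu}$-saturation claim as stated is false.
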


\subsection{Thermodynamical formalism}\label{Sec:thermo}

The main idea of the proof of the theorem is to use the thermodynamical formalism. We refer the reader to \cite{Wal82} for general results on this topic. 

We recall that if $\psi:\Lambda\to\R$ is a continuous function, an invariant probability measure $\mu$ is said to be \emph{an equilibrium state for $\psi$} if it satisfies 
$$h_{\mu}(f)+\int\psi\,d\mu=\sup_{\nu~\textrm{ergodic},~{\rm supp}\nu\subset\Lambda}\left\{h_{\nu}(f)+\int\psi\,d\nu\right\}=:{\CP(\psi)}.$$
 $\CP(\psi)$ is called the \emph{pressure} for the potential $\psi$. 
We recall that it also satisfies
\begin{eqnarray*}
\hskip -1cm\CP(\psi)&=&\lim_{\eps\to0}\limsup_{\ninf}\frac1n\sup\left\{\sum_{x\in \CE_{n,\eps}}e^{S_{n}(\psi)(x)},\ \CE_{n,\eps}=\textrm{maximal~}(n,\eps)\text{-separated set in }\La\right\}\\
&=&\lim_{\eps\to0}\limsup_{\ninf}\frac1n\inf\left\{\sum_{x\in \CE_{n,\eps}}e^{S_{n}(\psi)(x)},\ \CE_{n,\eps}=\textrm{minimal~}(n,\eps)\text{-spanning set in }\La\right\},
\end{eqnarray*}
where $S_{n}(\psi)=\psi+\ldots +\psi\circ f^{n-1}$. Recall that $x,y\in M$ are $(n,\eps)$-close if $d(f^i(x),f^i(y))<\varepsilon$ for any $0\le i\le n-1$; $x$ and $y$ are $(n,\eps)$-separated if 
$\exists\, 0\le k\le n-1,\ d(f^{k}(x),f^{k}(y))\ge\eps.$  A $(n,\eps)$-ball at $x$ is the set of all points that are $(n,\eps)$-close to $x$. A finite subset $\Gamma$ of $\Lambda$ is called
 $(n,\eps)$-spanning if any point of $\Lambda$ is contained in a $(n,\varepsilon)$-ball of some point in $\Gamma$.

 J. Yang (see \cite[Theorem A]{Yan14})  proved that the time-one map $f$ is entropy expansive on the singular hyperbolic attractor $\Lambda$. This yields the upper semi-continuity for the metric entropy in the weak star topology and it is well known that it implies:

\begin{lemma}\label{Lem:usc}
Every continuous function $\psi$ on $\Lambda$ has an equilibrium state $\mu$ supported on $\Lambda$.
\end{lemma}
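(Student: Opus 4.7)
The plan is to combine J.\ Yang's entropy expansiveness with standard compactness/variational-principle arguments. Concretely, I would invoke the fact that on the compact invariant set $\Lambda$ the space $\CM_{f}(\La)$ of $f$-invariant Borel probability measures is non-empty, convex and, crucially, compact in the weak-$*$ topology. This compactness is the backbone: once I know the functional $\mu\mapsto h_{\mu}(f)+\int\psi\,d\mu$ is upper semi-continuous on $\CM_{f}(\La)$, the supremum is attained, and by the variational principle that supremum is $\CP(\psi)$, so the maximizer is the desired equilibrium state.

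The first ingredient is the continuity of $\mu\mapsto \int\psi\,d\mu$ in the weak-$*$ topology, which is immediate from the definition of weak-$*$ convergence together with the continuity of $\psi$ on the compact set $\Lambda$. The second, and only non-trivial, ingredient is the upper semi-continuity of the metric entropy. This is where I would cite \cite[Theorem A]{Yan14}: $f=\varphi_{1}$ is entropy expansive on $\Lambda$, and a classical theorem of Bowen asserts that entropy expansiveness of a continuous map on a compact invariant set implies that $\mu\mapsto h_{\mu}(f)$ is upper semi-continuous in the weak-$*$ topology on $\CM_{f}(\La)$. Since the sum of a usc function and a continuous function is usc, the map
\[
\Phi:\mu\longmapsto h_{\mu}(f)+\int\psi\,d\mu
\]
is upper semi-continuous on $\CM_{f}(\La)$.

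To finish, I would apply the elementary fact that an upper semi-continuous real-valued function on a non-empty compact Hausdorff space attains its supremum: pick a maximizing sequence $(\mu_{n})\subset\CM_{f}(\La)$ with $\Phi(\mu_{n})\to\sup\Phi$, extract a weak-$*$ accumulation point $\mu\in\CM_{f}(\La)$, and use upper semi-continuity to obtain $\Phi(\mu)\ge\limsup_{n}\Phi(\mu_{n})=\sup\Phi$. By the variational principle (as restated in Section~\ref{Sec:thermo}), $\sup_{\nu\in\CM_{f}(\La)}\Phi(\nu)=\CP(\psi)$, so $\mu$ is an equilibrium state for $\psi$ supported on $\Lambda$. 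A minor point worth noting is that the variational principle involves a supremum over ergodic measures, but this is harmless: any invariant $\mu$ has an ergodic decomposition, and both $h_{\mu}(f)$ and $\int\psi\,d\mu$ are affine in $\mu$, so the two suprema coincide (and ergodic components of an equilibrium state are again equilibrium states).

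I do not expect a real obstacle here; the only substantive input is the entropy expansiveness from \cite{Yan14}, which I am taking as a black box. The rest of the argument is the standard usc-plus-compactness recipe for producing equilibrium states, and I would keep the written proof short accordingly.
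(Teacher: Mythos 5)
Your proposal is correct and follows essentially the same route as the paper, which simply invokes the entropy expansiveness of $f=\varphi_1$ from \cite[Theorem A]{Yan14} to get upper semi-continuity of the metric entropy and then the standard compactness argument. You have merely written out the well-known details (weak-$*$ compactness of the space of invariant measures, attainment of the supremum by an usc functional, and the passage between ergodic and general invariant measures via affinity of entropy), all of which are sound.
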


Denote by $J^{cu}$ the Jacobian for $f$ restricted to the $F^{cu}$ direction, \ie $J^{cu}=\det {\rm D}f_{|F^{cu}}$. We also set  $V:=-\log |J^{cu}|$. It is a continuous function. Thus, by Lemma~\ref{Lem:usc}, there exists at least one equilibrium state for $V$. 
Our goal is to show that this equilibrium state is unique and is the SRB measure.



%
%

%

\subsection{Plan of the proof}
The paper proceeds as follows. 
In Section \ref{sec-existence} we prove ${\CP(V)=0}$.
This shows that any equilibrium state for $ V=-\log \left|J^{cu}\right|$ satisfies the Pesin entropy formula, and therefore, such an equilibrium state  is an SRB measure  if and only if it admits a positive Lyapunov exponent (by \cite{LeY85}). Then, we prove that an equilibrium state for $V$ has one positive Lyapunov exponent.

In Section \ref{sec-uniqu} we prove  that any SRB measure is physical  and then deduce the uniqueness from  transitivity. 

In Appendix \ref{appen} we recall several classical properties and state precisely some  definitions such as those of strong stable manifolds, the Oseledets splitting of invariant measures, the absolutely continuity, etc. 
 
\begin{Acknowledgements}

{We thank R. Scott for correcting english,  J. Buzzi, Y. Cao, S. Crovisier, Y. Hua, F. Ledrappier  and the anonymous referee for useful discussions, advices and improvements  for the redaction of this paper.}

\end{Acknowledgements}


\section{The Existence of ergodic SRB measures}\label{sec-existence}

In this section we prove that ${\CP(V)=0}$. As we said above, a standard argument will imply that any equilibrium state for $V$, is an SRB measure. 

\subsection{Upper bound for ${\CP(V)}$}

\begin{lemma}\label{lem-presbounded}
{$\CP(V)\le 0$.} Moreover, if $\CP(V)=0$ then, there exists an ergodic measure which satisfies the Pesin entropy formula and is  an equilibrium state for $V$.
\end{lemma}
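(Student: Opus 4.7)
The plan is to combine the variational principle with Ruelle's inequality. The core task is to establish $h_\mu(f)+\int V\,d\mu\le 0$ for every ergodic invariant probability measure $\mu$ supported in $\Lambda$; the upper bound $\CP(V)\le 0$ then follows from the variational principle, and the ``moreover'' statement from Lemma~\ref{Lem:usc} combined with an ergodic decomposition.

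To prove the componentwise bound, I would split on the nature of $\mu$. If $\mu=\delta_\sigma$ for some singularity $\sigma$, then $h_\mu=0$ and $\int V\,d\mu=-\log|J^{cu}(\sigma)|=-\operatorname{trace}({\rm D}X(\sigma)|_{F^{cu}(\sigma)})$; writing $\alpha_1,\ldots,\alpha_k$ for the real parts of the eigenvalues of ${\rm D}X(\sigma)|_{F^{cu}(\sigma)}$ (with $k=\dim F^{cu}\ge 2$), sectional expansion at $\sigma$ gives $\alpha_i+\alpha_j\ge\lambda>0$ for every pair $i\ne j$, and summing over all $\binom{k}{2}$ pairs yields $(k-1)\sum_i\alpha_i\ge\binom{k}{2}\lambda$, so $\sum_i\alpha_i>0$ and $\int V\,d\mu<0$ strictly. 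If instead $\mu$ gives full mass to the regular set of $\Lambda$, then standardly $\mu$ has a zero Lyapunov exponent coming from the flow direction, and this zero exponent must lie in the $F^{cu}$-part of the Oseledets splitting, since $E^{ss}$ is uniformly contracting and therefore carries only strictly negative exponents. Sectional expansion, applied to the $2$-planes spanned by this zero-exponent flow direction together with any other Oseledets vector of $\mu$ in $F^{cu}$, forces every remaining Oseledets exponent of $\mu$ in $F^{cu}$ to be at least $\lambda>0$. Since positive exponents cannot lie in $E^{ss}$, one obtains
$$\int\log|J^{cu}|\,d\mu\;=\;\sum_{\lambda_i(\mu)\in F^{cu}}\lambda_i(\mu)\;=\;\sum_{\lambda_i(\mu)>0}\lambda_i(\mu),$$
and Ruelle's inequality then gives $h_\mu(f)\le\sum_{\lambda_i(\mu)>0}\lambda_i(\mu)=-\int V\,d\mu$, completing the bound.

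For the ``moreover'' part, assume $\CP(V)=0$ and let $\mu$ be an equilibrium state for $V$ provided by Lemma~\ref{Lem:usc}. By the ergodic decomposition and affinity of entropy, $h_{\mu_t}(f)+\int V\,d\mu_t=0$ for $\tau$-almost every ergodic component $\mu_t$ of $\mu$. The strict inequality in the singular case above prevents Diracs at singularities from being equilibrium states, so some ergodic equilibrium state $\nu$ is supported on the regular set of $\Lambda$; the equality in Ruelle's inequality for $\nu$ then reads $h_\nu(f)=\sum_{\lambda_i(\nu)>0}\lambda_i(\nu)$, which is exactly the Pesin entropy formula. The main delicate point of the argument is the regular case: one must show that every Oseledets exponent of $\mu$ in $F^{cu}$ is non-negative, with the zero exponent coming precisely from the flow direction. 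This relies crucially on sectional expansion; without it, one could imagine negative exponents in $F^{cu}$, and the chain of equalities between $\int\log|J^{cu}|\,d\mu$ and $\sum_{\lambda_i>0}\lambda_i(\mu)$ would fail.
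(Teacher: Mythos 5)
Your proof is correct and follows essentially the same route as the paper: split ergodic measures into Diracs at singularities (where sectional expansion forces $\int V\,d\mu<0$) and regular measures (where the zero exponent is the flow direction, the remaining $F^{cu}$-exponents are positive, and Ruelle's inequality gives $h_\mu(f)+\int V\,d\mu\le 0$), then conclude via the variational principle and an ergodic decomposition of an equilibrium state. Your treatment of the singular case via the trace of ${\rm D}X(\sigma)|_{F^{cu}}$ is merely a more explicit version of the paper's appeal to the definition of sectional expansion.
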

\begin{proof} We remind the reader that $V(x)=-\log |J^{cu}(x)|$. We prove the lemma by the following steps. {Let $\mu$ be an ergodic invariant probability measure.}

$\bullet$ {If $\mu$ is concentrated on a singularity, by the definition of sectional expansion, we have $\int V\,d\mu<0$ and $h_{\mu}(f)=0$.} Then 
$$h_\mu(\varphi_1)+\int V d\mu< 0.$$

$\bullet$ If $\mu$ is not concentrated on a singularity, by Lemma \ref{Lem:measure-hyperbolic} , we have the following measurable Oseledets splitting 
$$E^{ss}\oplus \underbrace{\left<X\right>\oplus E^{u}_\mu}_{=F^{cu}}.$$
Note that $\disp -\int V\,d\mu$ is the integral of the sum of { all Lyapunov exponents along $F^{cu}$. Among them, the Lyapunov exponent along the flow direction is zero and the others are positive. Therefore, the sum of positive Lyapunov exponents equals the sum of Lyapunov exponents along $F^{cu}$; \ie} 
$$\int V\,d\mu=-\sum \la^{+}_{\mu},$$
where $\sum \la^+_{\mu}$ is the sum of all positive Lyapunov exponents of $\mu$. By the {Ruelle inequality},  we have
$$h_{\mu}(f)\le \sum{\la^+_{\mu}}=-\int V\,d\mu.$$
$\bullet$ Now we thus have
$$\CP(V)\le \sup_{\mu~{\rm ergodic}}\{h_\mu(\varphi_1)+\int V d\mu\}\le 0.$$

The second statement in the Lemma now easily follows:  if $\CP(V)=0$, let { $\mu$ be an equilibrium state for $V$ (it exists due to Lemma \ref{Lem:usc}). Therefore  $\CP(V)=h_{\mu}(f)+\int V\,d\mu=0$ holds. Hence, $h_{\nu}(f)+\int V\,d\nu=0=\CP(V)$ holds for at least one ergodic component $\nu$ of $\mu$ (see \cite[Th. 4.1.12 and Cor. 4.3.17]{HK}).  
Thus, $\nu$ is ergodic, satisfies the Pesin entropy formula and is an equilibrium state for $V$.}
\end{proof}

\subsection{Lower bound for {$\CP(V)$}}

We follow and adapt the volume lemma from Bowen (see \cite[Section 4.B]{Bowen-lnm470}) and its version from Qiu (see \cite{Qiu11}). 

\begin{proposition}
\label{prop-lowerboundpress}
Let $\La$ be the singular hyperbolic attractor with splitting $T_{\La}M=E^{ss}\oplus F^{cu}$. Then 
$\CP(V)\ge 0$. 
\end{proposition}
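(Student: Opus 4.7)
The plan is to adapt the classical Bowen volume lemma (cf.\ \cite[Sec.~4.B]{Bowen-lnm470}, \cite{Qiu11}) to the singular hyperbolic setting. The key observation is that $S_{n}V(x)=-\log|J^{cu}_{n}(x)|$ with $J^{cu}_{n}:=\prod_{i=0}^{n-1}J^{cu}\circ f^{i}$, so the weighted sums computing $\CP(V)$ from below reduce to $\sum_{x}|J^{cu}_{n}(x)|^{-1}$. It therefore suffices to exhibit, for every sufficiently small $\eps>0$, a constant $c(\eps)>0$ and, for every $n\ge 1$, an $(n,\eps)$-separated set $E_{n}\subset\La$ satisfying $\sum_{x\in E_{n}}|J^{cu}_{n}(x)|^{-1}\ge c(\eps)$; this immediately forces $\CP(V)\ge 0$.

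First I would pick a regular point $x_{0}\in\La$ and a small $k$-dimensional disk $D\subset\La$ through $x_{0}$ tangent to the $F^{cu}$-cone field, with $k=\dim F^{cu}$. The $\mathrm{D}f$-invariance of that cone field, together with the sectional expansion along $F^{cu}$, guarantees that every forward iterate $f^{n}(D)$ is still a cu-disk and grows in cu-volume. Next I would establish a bounded-distortion estimate on $(n,\eps)$-dynamical balls along $D$: there is $C=C(\eps)>0$ such that $C^{-1}\le |J^{cu}_{n}(y)|/|J^{cu}_{n}(x)|\le C$ for every $x\in D$ and every $y\in B_{n}(x,\eps)\cap D$. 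Comparing volumes through the change of variables $f^{n}$ then gives
\[
c_{1}\eps^{k}|J^{cu}_{n}(x)|^{-1}\;\le\;\mathrm{vol}_{D}\bigl(B_{n}(x,\eps)\cap D\bigr)\;\le\;c_{2}\eps^{k}|J^{cu}_{n}(x)|^{-1}.
\]
Taking $E_{n}\subset D$ to be a maximal $(n,\eps)$-separated set, the corresponding $(n,\eps)$-balls cover $D$ by maximality, so
\[
\mathrm{vol}_{D}(D)\;\le\;\sum_{x\in E_{n}}\mathrm{vol}_{D}\bigl(B_{n}(x,\eps)\cap D\bigr)\;\le\;c_{2}\eps^{k}\sum_{x\in E_{n}}|J^{cu}_{n}(x)|^{-1},
\]
which yields the uniform lower bound $\sum_{x\in E_{n}}e^{S_{n}V(x)}\ge c_{2}^{-1}\eps^{-k}\mathrm{vol}_{D}(D)$, and hence $\CP(V)\ge 0$.

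I expect the main obstacle to be the bounded-distortion step near the singularity $\s$: since $|J^{cu}|$ may become very small on orbit segments passing close to $\s$, the mere continuity of $V=-\log|J^{cu}|$ on $\La$ is not enough to control the cumulative distortion $\sum_{i<n}\bigl[\log|J^{cu}|(f^{i}(y))-\log|J^{cu}|(f^{i}(x))\bigr]$ from the defining bound $d(f^{i}(x),f^{i}(y))\le\eps$ alone. The remedy should combine the sectional expansion on $F^{cu}$, which forces the distance inside $f^{i}(D)$ to shrink exponentially as $i$ decreases from $n$, with the domination $\|\mathrm{D}\varphi_{t}|_{E^{ss}}\|\,\|\mathrm{D}\varphi_{-t}|_{F^{cu}(\varphi_{t}(\cdot))}\|\le Ce^{-\la t}$, so that the variations of $\log|J^{cu}|$ along the orbit segment become geometrically summable. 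The $\CC^{2}$ hypothesis on $X$ should come in exactly here, providing $\log|J^{cu}|$ with sufficient local regularity on iterates of $D$ away from $\s$.
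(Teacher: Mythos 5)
Your overall architecture --- compare the $cu$-volume of dynamical balls with $e^{S_{n}V}$ on a positive-volume disk inside $\La$, \`a la Bowen's volume lemma --- is the right one and close to the paper's (the paper works with minimal spanning sets of $\La$ and saturates a $cu$-disk by local strong stable manifolds, invoking absolute continuity of the strong stable foliation; you work directly on the disk with separated sets, which is a legitimate variant). But there is a genuine gap at the step you yourself flag: the bounded-distortion estimate $C(\eps)^{-1}\le |J^{cu}_{n}(y)|/|J^{cu}_{n}(x)|\le C(\eps)$ \emph{uniformly in $n$}. The remedy you propose rests on the claim that sectional expansion forces distances inside $f^{i}(D)$ to contract exponentially as $i$ decreases from $n$. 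This is false: the flow direction $\left<X\right>$ lies inside $F^{cu}$, is neither expanded nor contracted along regular orbits, and degenerates at the singularity; sectional expansion controls only two-dimensional areas in $F^{cu}$, not norms of individual vectors, and the complementary ``truly unstable'' directions form only a measurable sub-bundle (defined almost everywhere for each ergodic measure), not a continuous uniformly expanded one. So the cumulative distortion $\sum_{i<n}\bigl|\log|J^{cu}|(f^{i}y)-\log|J^{cu}|(f^{i}x)\bigr|$ cannot be made geometrically summable by this route, and a uniform-in-$n$ distortion constant is not available from the stated hypotheses.

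The good news is that you do not need it. Since $V=-\log|J^{cu}|$ is uniformly continuous on the compact set $\La$, for each $\eps$ there is $C_{\eps}$, with $C_{\eps}\to 0$ as $\eps\to 0$, such that $d(x,y)<\kappa\eps$ implies $\bigl|\log|J^{cu}(x)|-\log|J^{cu}(y)|\bigr|\le C_{\eps}$; hence the distortion over an $(n,\eps)$-ball is at most $e^{nC_{\eps}}$. Plugging this weaker bound into your chain of inequalities gives $\sum_{x\in E_{n}}e^{S_{n}V(x)}\ge c(\eps)\,e^{-nC_{\eps}}$, and after taking $\frac1n\log$, $\limsup_{n}$, and then $\eps\to 0$, the loss $C_{\eps}$ disappears and $\CP(V)\ge 0$ follows. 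This is exactly the paper's (and Bowen's) device, and it is what lets the argument bypass any delicate analysis near the singularity. One further point you should not gloss over: the existence of a $k$-dimensional disk $D\subset\La$ tangent to the $F^{cu}$-cone is not automatic ($\La$ is merely a compact invariant set a priori); the paper produces one by taking a Pesin local unstable manifold of an ergodic measure of positive entropy (positive entropy being supplied by \cite{Yan14}) and flowing it, using that unstable manifolds of points of an attractor stay in the attractor.
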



\begin{proof}
 We use the characterization that we recalled just after the definition for pressure:
$$\CP(V)=\lim_{\eps\to0}\limsup_{\ninf}\frac1n\inf\left\{\sum_{x\in \CE_{n,\eps}}e^{S_{n}(V)(x)},\ \CE_{n,\eps}=\textrm{minimal~}(n,\eps)\text{-spanning set in }\La\right\}.$$

Let $\eps>0$ be fixed and consider a minimal $(n,\eps)$-spanning set $\CE_{n,\eps}=\{\xi_{i}\}$. Therefore $\disp \bigcup B_{n}(\xi_{i},\eps)$ is an open neighborhood of $\Lambda$.

\begin{lemma}
There is $\varepsilon_0>0$ such that for any $\varepsilon\in(0,\varepsilon_0)$, there is $m_{\eps}>0$ {independent of $n$} such that 
any cover of $\Lambda$  by $(n,\eps)$-balls has Lebesgue measure  larger than $m_\varepsilon$.
\end{lemma}
\begin{proof}
{ First, we claim that  $\Lambda$ contains regular orbits. Indeed, if this does not hold, then $\Lambda$  only consists on singularities and transitivity yields that $\Lambda$ is a single singularity, say $\s$.  The bundle $F^{cu}$ is non-trivial and sectional expanding. Therefore the unstable space $E^{u}(\sigma)$ is non-trivial. Consequently, $\Lambda=\{\sigma\}$ cannot be an attractor. }


By \cite[Theorem A]{Yan14}, the topological entropy on $\Lambda$ is  { thus positive.} 
By the variational principle, there is an $f$-invariant ergodic measure $\mu$   { with} positive metric entropy. Since $\mu$ is non-trivial and supported on a singular hyperbolic attractor, $\mu$ is hyperbolic as stated in Lemma \ref{Lem:measure-hyperbolic}. We can use Pesin theory for that measure and construct local unstable manifolds $W^{u}_{loc}(x)$ for $\mu$-almost every point $x$. 
 
 One can choose a point $x\in\Lambda$ such that { $W^{u}_{loc}(x)$ contains a disk for the topology of the embedded manifold $W^{u}_{loc}(x)$, and whose diameter for the metric of $M$ is $\varepsilon_1>0$.} We denote this disk by $D^{u}(\eps_1)$. Since $\Lambda$ is a topological attractor, the unstable manifold of any point of $\Lambda$ is contained in $\Lambda$. Thus, $D^{cu}(\eps_1):=\disp \bigcup_{t\in[-\eps_1,\eps_1]}\varphi_{t}(D^{u}(\eps_1))$ is contained  in $\Lambda$. This set has a Lebesgue measure of order $\eps_1^{\dim F^{cu}}$. See more in Subsec.~\ref{sssec-absconti} about the metrics between the manifold $M$ and sub-manifolds.

Take $\eps$ is smaller than $\eps_1$ and $\eps_{\Lambda}$ (which is the size of local strong stable manifolds from  Subsection~\ref{ssec-localmanifold}). All the points in $D^{cu}(\eps_1)$ admit a stable local manifold $W^{ss}_{\eps}$. Then, $\disp \bigcup B_{n}(\xi_{i},\eps)$ contains  the set 
 $$\bigcup_{y\in D^{cu}(\eps_1)} W^{ss}_{\eps/4}(y),$$ 
 which has a Lebesgue measure of order $\eps^{\dim E^{ss}}$ (using the absolute continuity of the stable foliation, see Subsec. \ref{sssec-absconti}).   
 \end{proof}

%

%

We assume that $\eps$ is much smaller than this $\eps_{0}$, where $\eps$ is used to define minimal $(n,\eps)$-spanning sets. Since $\Lambda$ is an attractor, the bundle $E^{ss}$ and the local strong stable foliation $W^{ss}_{loc}$ can be extended in a neighborhood of $\Lambda$ (see \cite[Theorem 5.5]{HPS77} and Subsec. \ref{ssec-localmanifold}).  

Define $B^{cu}(x,\rho)$ as the balls of center $x\in\Lambda$ and radius $\rho<\eps_{0}$ respectively in the local $cu$-plaque $W^{cu}_{loc}(x)$.  Denote by
$$B^{cu}_n(x,\rho)=\{y:~f^i(y)\in B^{cu}(f^i(x),\rho),~\forall 0\le i\le n-1\}.$$

The metrics between $M$ and local strong stable manifolds have uniform ratios ( see the property (D1-D3) in Subsec. \ref{sssec-absconti}). More precisely, there is $\kappa>1$ such that for all $\varepsilon$ small enough, 
\begin{equation}
W^{ss}_{\rho/\kappa}(B^{cu}(x,\rho/\kappa))\subset B(x,\rho)\subset W^{ss}_{\kappa\rho}(B^{cu}(x,\kappa\rho)).
\end{equation}

The continuity {of} $Df$ and the continuity of $F^{cu}$ imply that there exists $C_{\eps}>0$ such that for every $x$ and $y$ in $\La$, 
\begin{equation}
\label{eq1-disto1shoot}
d(x,y)<\kappa\eps\Longrightarrow e^{-C_{\eps}}\le \left|\frac{J^{cu}(x)}{J^{cu}(y)}\right|\le e^{C_{\eps}}.
\end{equation}
{Note that by continuity of $F^{cu}$, $C_\eps\to 0$ as $\eps\to 0$.}

{Again, the absolute continuity of the stable foliation yields:}
 \begin{eqnarray*}
m_{\eps}&\le &\sum_{i}\Leb(B_{n}(\xi_{i},\eps))\le \sum_{i}\Leb\left(W^{ss}_{\kappa\varepsilon}(B_n^{cu}(\xi_i,\kappa\eps))\right)\\
&\le& C_{s}(\kappa.\eps)^{\dim E^{ss}}.\sum_{i}\int_{f^{n-1}(B_{n}^{cu}(\xi_{i},\kappa\eps))}e^{S_{n}(V)(x)}dx\\
&\le&  C_{s}(\kappa.\eps)^{\dim E^{ss}}.e^{nC_{\eps}}\sum_{i}e^{S_{n}(V)(\xi_{i})}\Leb^{cu}(B^{cu}(f^{n-1}(\xi_{i}),\kappa.\eps))\\
&\le&  C_{s}C_{cu}(\kappa.\eps)^{\dim E^{ss}}.e^{nC_{\eps}}\sum_{i}e^{S_{n}(V)(\xi_{i})}.
\end{eqnarray*}
Where  the second inequality (thus the constant $C_s$) comes from \eqref{equ-controllebprod} (see Subsec.~\ref{sssec-absconti}) and conformity of the Lebesgue measure. $C_{cu}$ is a uniform upper bound of the Lebesgue measure on $B^{cu}(x,\rho)$.

Taking $\disp\frac1n\log$ in the last inequality, then taking $\limsup_{\ninf}$ and then $\eps\to0$ we get
 $$\CP(V)\ge 0.$$
\end{proof}

\subsection{Proof for the existence of ergodic SRB measures}

The map $f$ is $\CC^{2}$, we can thus use the Pesin theory. { By Lemma~\ref{Lem:usc},  $V$ has an equilibrium state. By Proposition \ref{prop-lowerboundpress}, $\CP(V)\ge 0$ and by Lemma \ref{lem-presbounded}, $\CP(V)\le 0$. This yields $\CP(V)=0$. Then, the second part of Lemma \ref{lem-presbounded} yields the existence of  an ergodic equilibrium state for $V$, $\mu$,  which satisfies:}
$$h_{\mu}(f)=\sum\la^{+}_{\mu}.$$
The fact that $F^{cu}$ is sectional expanded and  has dimension at least 2 implies that every ergodic measure has a positive Lyapunov exponent. So does $\mu$. 


Therefore, $\mu$ is an SRB measure and the proof of the existence of ergodic SRB measures in Theorem \ref{th-main} is complete.


\section{The SRB measure is unique and physical}\label{sec-uniqu}

Almost all ergodic components of an SRB measure are also SRB measures (see \cite[Corollaire 4.10]{ledrappier-srb}). To get uniqueness for the SRB measure, it is thus sufficient to get uniqueness for ergodic SRB measures. The uniqueness will follow from the fact that the disintegration of an SRB measure for a $C^{1+\al}$ diffeomorphism is more than absolutely continuous with respect to $\Leb^{u}$; it is actually equivalent to it.

\begin{lemma}
\label{lem-muuequivleb}
Let $\mu$ be an ergodic SRB measure for $f=\varphi_1$. Let $\{\mu_{u}\}$ be its system of conditional measures with respect to any measurable partition subordinate to the unstable foliation. Then almost every $\mu_{u}$ is equivalent to $\Leb^{u}$. 
\end{lemma}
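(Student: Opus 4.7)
The plan is to upgrade the absolute continuity built into the definition of SRB measure to a genuine equivalence by exhibiting a strictly positive explicit formula for the density. Concretely, writing $\rho_x := d\mu_u/d\Leb^u$ on the atom through $x$, I aim to show that $\rho_x>0$ holds $\Leb^u$-a.e.\ on that atom for $\mu$-a.e.\ $x$; combined with $\mu_u\ll\Leb^u$, this yields $\mu_u\sim\Leb^u$. The argument is essentially Ledrappier's classical derivation of the density formula (see \cite{ledrappier-srb}), specialised to the present setting; the only real task is to verify that its hypotheses are in force for an ergodic SRB measure on a singular hyperbolic attractor.

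First, I would note that since $\mu$ is ergodic, has positive entropy $h_{\mu}(f)=\sum\la^{+}_{\mu}>0$, and is not concentrated on the singularity (by Lemma~\ref{Lem:measure-hyperbolic}), standard Pesin theory provides genuine local unstable manifolds $W^{u}_{loc}(x)$ and a measurable partition $\xi$ subordinate to the unstable foliation at $\mu$-a.e.\ $x$. Using the $f$-invariance of $\mu$, the fact that $f$ maps unstable leaves diffeomorphically onto unstable leaves with unstable Jacobian $J^{u}$, and iterating the corresponding change-of-variables identity for the conditionals $\mu_{u}$, I would derive the ratio formula
$$\frac{\rho_x(y)}{\rho_x(z)}=\prod_{n=1}^{\8}\frac{J^{u}(f^{-n}(z))}{J^{u}(f^{-n}(y))}$$
for any $y,z$ in the atom of $\xi$ through $x$.

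Second, I would verify that this infinite product converges to a strictly positive, continuous function of $(y,z)$ on compact plaques of $W^{u}_{loc}(x)$. For $y,z\in W^{u}_{loc}(x)$ the backward iterates $f^{-n}(y)$ and $f^{-n}(z)$ shadow $f^{-n}(x)$ and are pushed together at an exponential rate (uniform on Pesin blocks). Because $f$ is $\CC^{2}$ and $E^{u}$ is \hold continuous on each Pesin block, $\log J^{u}$ is \hold along unstable plaques, so each factor of the product equals $1+O(\theta^{n})$ for some $\theta<1$ and the product converges. Normalising at a base point then shows $\rho_x$ is strictly positive and continuous on the atom through $x$, whence $\mu_u\sim\Leb^u$.

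The main obstacle is to ensure that the whole Pesin package (existence of unstable manifolds, existence of a subordinate measurable partition, \hold regularity of $E^{u}$ and hence of $J^{u}$, uniform exponential backward contraction on Pesin blocks) really is available in this singular partially hyperbolic setting. Once one knows that $\mu$ is non-atomic and hyperbolic, however, nothing about the presence of the singularity interferes with the classical Pesin machinery: the singularity is not in the support of $\mu_u$ for $\mu$-a.e.\ $x$, and the argument proceeds exactly as in \cite{ledrappier-srb}.
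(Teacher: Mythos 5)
Your proposal is correct and follows essentially the same route as the paper: both rest on the Ledrappier--Young density formula $\rho_x(y)/\rho_x(z)=\prod\bigl(J^{u}(f^{-n}z)/J^{u}(f^{-n}y)\bigr)$ along unstable plaques, and both conclude by showing the product converges to a quantity bounded away from zero and infinity via backward contraction of unstable manifolds together with the regularity of the unstable Jacobian. The paper simply cites the remark on page 513 of \cite{LeY85} for the formula rather than rederiving it, and the minor discrepancy in where your product starts ($n=1$ versus $k=0$) only changes the density by a harmless bounded factor.
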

\begin{proof}
Remember (see Lemma \ref{Lem:measure-hyperbolic}) that there exist two measurable subbundles $E^{c}=\left<X\right>$ and $E^{u}_\mu$ defined $\mu$-almost everywhere  and invariant {for the time-one map $f$} such that 
$$E^{ss}\oplus \underbrace{\left<X\right>\oplus E^{u}_\mu}_{=F^{cu}}.$$
Let $\xi$ be a partition subordinate to the {(Pesin)} unstable foliation $W^{u}$ {of the time-one map $f$}. {From the remark in \cite[Page 513]{LeY85}}, for $\mu$-a.e. $x$, the density $\rho_{x}$ of $\mu_{u}$ on $\xi(x)$ with respect to $\Leb^{u}$ satisfies
$$\frac{\rho_{x}(z)}{\rho_{x}(y)}=\prod_{k=0}^{+\8}\frac{\left|\det Df_{|E^{u}_\mu}(f^{-k}(z))\right|}{\left|\det Df_{|E^{u}_\mu}(f^{-k}(y))\right|}.$$
By the (non-uniformly) backward contraction property of the unstable manifold, the above quantity converges and is also bounded away from zero,  uniformly in $y$ and $z$ chosen in $\xi(x)$. This shows that the density is a continuous function on $\xi(x)$ and  does not vanish. 
\end{proof}

\bigskip
Let us now prove the uniqueness for the SRB measure and that it is physical. {For the map $f$ and the measure $\mu$, a point $x$ is said to be \emph{forward $\mu$-generic} if $1/n\sum_{i=0}^{n-1}\delta_{f^i(x)}\to\mu$ as $n\to\infty$.}

\begin{lemma}
\label{lem-srb-phys}
Let $\mu$ be an ergodic SRB measure. Then there exists {a non-empty open} set $U$ such that Lebesgue almost every point in $U$ is forward $\mu$-generic. 
\end{lemma}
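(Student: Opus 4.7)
The plan is to exploit the product structure at a $\mu$-generic Pesin point: a local unstable Pesin disk, the flow direction and the strong stable foliation together span $TM$, and their saturation contains an open set of $\mu$-generic points. Concretely, I would start by applying the Birkhoff ergodic theorem to obtain a set $G$ of forward $\mu$-generic points with $\mu(G)=1$. Combining this with Lemma~\ref{lem-muuequivleb} (equivalence of $\mu_u$ and $\Leb^u$), I pick a Pesin-regular $x\in G$ and extract a local Pesin unstable disk $D^u\subset W^u_{loc}(x)$ on which $\Leb^u$-almost every point belongs to $G$. The point $x$ is not a singularity, since $\mu$ has a positive Lyapunov exponent and therefore is not concentrated on a singularity.

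Next, because $X(x)\neq 0$, for some small $\tau>0$ the map $(y,t)\mapsto\varphi_t(y)$ embeds $D^u\times(-\tau,\tau)$ as a $C^1$ disk $D^{cu}$ of dimension $\dim F^{cu}$ tangent to $F^{cu}$. Flow-invariance of $\mu$ makes $G$ flow-invariant: for $y\in G$ and any $t$,
$$\frac1n\sum_{i=0}^{n-1}\delta_{f^{i}(\varphi_{t}(y))}=(\varphi_{t})_{*}\!\left(\frac1n\sum_{i=0}^{n-1}\delta_{f^{i}(y)}\right)\xrightarrow[\ninf]{}(\varphi_{t})_{*}\mu=\mu.$$
Fubini on the product $D^u\times(-\tau,\tau)$ then yields that $\Leb^{cu}$-almost every point of $D^{cu}$ belongs to $G$.

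I would finish by saturating with strong stable manifolds. Since $E^{ss}$ and $W^{ss}_{loc}$ extend to a neighborhood of $\Lambda$ (Subsec.~\ref{ssec-localmanifold}), and since $\dim W^{ss}+\dim D^{cu}=d$ with the two directions transverse, the set
$$S:=\bigcup_{y\in D^{cu}}W^{ss}_{\eps}(y)$$
contains a non-empty open subset $U\subset M$. By absolute continuity of the strong stable foliation (Subsec.~\ref{sssec-absconti}), $\Leb$ on $S$ disintegrates with its marginal on $D^{cu}$ absolutely continuous with respect to $\Leb^{cu}$, so $\Leb$-almost every $z\in S$ lies on $W^{ss}_{\eps}(y)$ for some $y\in D^{cu}\cap G$. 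For any such $z$, $d(f^n(z),f^n(y))\to 0$ as $\ninf$, hence by uniform continuity of every continuous observable, $z$ shares $y$'s time averages and is forward $\mu$-generic. Restricting to $U\subset S$ gives the lemma.

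The step I expect to be the main technical hinge is the third one, namely producing an honest non-empty open set $U$ inside the saturation $S$. This relies both on the extendibility of the strong stable foliation to a full neighborhood of $\Lambda$ (already invoked in Proposition~\ref{prop-lowerboundpress}) and on the transversality of $E^{ss}$ with the embedded $cu$-disk; the absolute continuity statement from the Appendix then does the remaining work of converting $\Leb^{cu}$-a.e.\ genericity on $D^{cu}$ into $\Leb$-a.e.\ genericity on $U$.
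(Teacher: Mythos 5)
Your proposal is correct and follows essentially the same route as the paper: pick a $\mu$-generic point, use Lemma~\ref{lem-muuequivleb} to get $\Leb^u$-a.e.\ genericity on an unstable plaque, thicken by the flow direction to a $\dim F^{cu}$-dimensional disk of generic points, and saturate by local strong stable manifolds, concluding with the absolute continuity of the strong stable foliation. The only cosmetic difference is that the paper invokes the invariance of domain theorem to see that the saturated set is itself open, whereas you extract an open subset of it by a transversality/dimension count; both are fine.
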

\begin{proof}
Since $\mu$ is an SRB measure, we have that $\disp h_{\mu}+\int V\,d\mu=0$. Thus $\mu$ cannot be concentrated on a singularity (see the first part of the proof of Lemma \ref{lem-presbounded}). Then it is hyperbolic for the flow $\varphi_t$ by Lemma~\ref{Lem:measure-hyperbolic}. Let $\xi$ be any measurable partition subordinate to the unstable foliation (for $\mu$) and choose $x_{1}$ forward $\mu$-generic. 
 We can assume that $x_{1}$ is in the interior of $\xi(x_{1})$. By Lemma \ref{lem-muuequivleb}, $\Leb^{u}$-a.e. every $x$ in $\xi(x_{1})$ is forward $\mu$-generic.  
Clearly, if $x$ is {forward} $\mu$-generic, then for every $t$, $\varphi_{t}(x)$ is also forward $\mu$-generic. Moreover, every $y$ in $W^{ss}(x)$ is also {forward} $\mu$-generic. Let us set
$$U:=\bigcup_{t\in]-\eps,\eps[,\ x\in \xi(x_{1})} W^{ss}_{\eps}(\varphi_{t}(x)).$$
{By the hyperbolicity of $\mu$, $\xi_1$ is a $(\dim F^{cu}-1)$-dimensional sub-manifold. Consequently, $\varphi_{[-\eps,\eps]}(\xi(x_1))$ is a $\dim F^{cu}$-dimensional sub-manifold.} Hence $U$ is an open set by the invariance of domain theorem. {By the absolute continuity of the strong stable foliation,} Lebesgue almost every point in $U$ is forward $\mu$-generic. 
\end{proof}

\begin{remark}
With almost the same proof of Lemma~\ref{lem-srb-phys}, we have the following result. If $\mu$ is a hyperbolic SRB measure for a $C^2$ diffeomorphism $f$ or a $C^2$ vector field $X$, if ${\rm supp}(\mu)$ admits a partially hyperbolic splitting $E^{ss}\oplus F$ satisfying that $\dim E^{ss}$ is exactly the sum of the multiplicities of negative Lyapunov exponents of $\mu$, then there is a non-empty open set $U$ in the manifold such that Lebesgue almost every point in $U$ is forward $\mu$-generic.

\end{remark}

We can now prove the uniqueness of the SRB measure. 
Assume that $\mu_{1}$ and $\mu_{2}$ are two different ergodic SRB measures. Let $U_{1}$ and $U_{2}$ obtained from Lemma \ref{lem-srb-phys} for $\mu_{1}$ and $\mu_{2}$. 
Now, transitivity shows that for some $T$, $\varphi_T(U_{2})\cap U_{1}$ is a non-empty open set. Lebesgue almost every point in this intersection is {forward} $\mu_{1}$ and $\mu_{2}$-generic, which shows that $\mu_{1}=\mu_{2}$.

\appendix
\section{Classical results on (partially) hyperbolicity}\label{appen}
\subsection{Oseledets splitting for ergodic measures}
For an ergodic invariant measure $\mu$, denote by $E_\mu^s$ the sub-bundle associated to all the negative Lyapunov exponents and $E_\mu^u$ the sub-bundle associated to all the positive Lyapunov exponents.

An important property for singular hyperbolic attractors is that every invariant ergodic measure is {\it hyperbolic} in the following sense:
\begin{lemma}\label{Lem:measure-hyperbolic}
If an ergodic measure $\mu$ is not supported {on the set of singularities}\footnote{Note that $\mu$ is ergodic and it is not supported on the set of singularities. {Hence no singularity can be regular for $\mu$.}}, then it is hyperbolic, i.e., ${\rm D}\varphi_t$ has exactly one zero Lyapunov exponent that is given by the vector field $X$. Moreover,  $E^{s}_{\mu}=E^{ss}$, and $F^{cu}=\left<X\right>\oplus E^{u}_{\mu}$.
\end{lemma}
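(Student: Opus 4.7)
The plan is to realize the Oseledets decomposition of $\mu$ as a refinement of the dominated splitting $E^{ss}\oplus F^{cu}$, by combining the three structural estimates (contraction on $E^{ss}$, sectional expansion on $F^{cu}$, and domination) with Poincar\'e recurrence. First, since the singular set $\mathrm{Sing}(X)$ is $\varphi_t$-invariant and $\mu$ is ergodic, $\mu(\mathrm{Sing}(X))\in\{0,1\}$; by hypothesis it equals $0$, so $X(x)\neq 0$ for $\mu$-a.e.\ $x$. The contraction estimate $\|D\varphi_t|_{E^{ss}}\|\le Ce^{-\lambda t}$ immediately forces every Lyapunov exponent along $E^{ss}$ to be at most $-\lambda$, so $E^{ss}(x)\subset E^s_\mu(x)$ almost everywhere.

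The key technical step is to show that $X(x)\in F^{cu}(x)$ almost everywhere. Writing $X(x)=X^s(x)+X^{cu}(x)$ in the decomposition $E^{ss}(x)\oplus F^{cu}(x)$, the $D\varphi_t$-invariance of the splitting and the flow relation $D\varphi_t X(x)=X(\varphi_t(x))$ yield $\|X^s(\varphi_t(x))\|\le Ce^{-\lambda t}\|X^s(x)\|$ for $t\ge 0$. The section $x\mapsto X^s(x)$ is continuous on $\La$ (continuous projection of a continuous vector field along a continuous splitting), and Poincar\'e recurrence produces $t_n\to+\infty$ with $\varphi_{t_n}(x)\to x$; passing to the limit gives $\|X^s(x)\|=0$. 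The same recurrence argument, together with $\|D\varphi_t X(x)\|=\|X(\varphi_t(x))\|$ (bounded above on the compact $\La$ and bounded below by $\|X(x)\|/2$ along $t_n$), also forces the Lyapunov exponent along $\langle X(x)\rangle$ to be $0$.

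I then exploit sectional expansion to show that every other Oseledets subspace inside $F^{cu}$ has Lyapunov exponent at least $\lambda$. Let $E_j\subset F^{cu}(x)$ be an Oseledets subspace with exponent $\chi_j$, and choose $\mw\in E_j\setminus\{0\}$ non-collinear with $X(x)$ (always possible unless $E_j=\langle X(x)\rangle$ and $\chi_j=0$). Sectional expansion applied to the $2$-plane $\mathrm{span}\langle X(x),\mw\rangle$ gives
\[
|\det D\varphi_t|_{\mathrm{span}\langle X(x),\mw\rangle}|\ge Ce^{\lambda t},
\]
while Oseledets' theorem applied to the bivector $X(x)\wedge\mw$ shows that the left-hand side grows at exponential rate $0+\chi_j$. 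Hence $\chi_j\ge\lambda>0$. In particular the $0$-exponent Oseledets subspace is exactly $\langle X(x)\rangle$, and $F^{cu}=\langle X\rangle\oplus E^u_\mu$ with $E^u_\mu$ the sum of the remaining subspaces, all of strictly positive exponent.

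Putting everything together yields $E^s_\mu=E^{ss}$, $F^{cu}=\langle X\rangle\oplus E^u_\mu$, and $0$ as the unique zero exponent of $D\varphi_t$, which is the full content of the lemma. I expect the main obstacle to be the identification $X(x)\in F^{cu}(x)$ carried out in the second paragraph: it is the only place where continuity of the splitting, flow-invariance and Poincar\'e recurrence genuinely have to interact, and the subsequent sectional-expansion argument cannot even be phrased until this inclusion is secured.
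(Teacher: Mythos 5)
Your proof is correct and follows essentially the same route as the paper's own sketch: Poincar\'e recurrence of $\mu$-typical regular points forces a zero Lyapunov exponent along the flow direction, and sectional expansion of any $2$-plane containing $X(x)$ rules out any further non-positive exponent inside $F^{cu}$, while the uniform contraction handles $E^{ss}$. The only genuine addition is your explicit verification that $X(x)\in F^{cu}(x)$ (via the exponential decay of the $E^{ss}$-component of $X$ along the orbit combined with recurrence and continuity of the splitting), a point the paper's sketch passes over silently; otherwise the two arguments coincide.
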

\begin{proof}[Sketch of the proof]
Since $\mu$ is ergodic and is not supported on the set of singularities, we have that almost every point $x$ of $\mu$ is a non-singular point and recurrent. Thus, there is a sequence $(t_n)$ such that $\lim_{n\to\infty}t_n=\infty$ and $\lim_{n\to\infty}\varphi_{t_n}(x)=x$. Consequently $\lim_{n\to\infty}X(\varphi_{t_n}(x))=X(x)\neq 0$. The Lyapunov exponent along $X(x)$ is
$$\lim_{t\to\infty}\frac{1}{t}\log\frac{|D\varphi_t X(x)|}{|X(x)|}=\lim_{n\to\infty}\frac{1}{t_n}\log\frac{|D\varphi_{t_n} X(x)|}{|X(x)|}=\lim_{n\to\infty}\frac{1}{t_n}\log\frac{|X(\varphi_{t_n}(x))|}{|X(x)|}=0.$$

Now we show that there is at most one zero Lyapunov exponent. Note that the non-zero vector that has zero Lyapunov exponent cannot be contained in $E^{ss}$. Thus one can assume that it is contained in $F^{cu}$. {Assume by contradiction} that there is a vector $v\in F^{cu}(x)\setminus\left<X(x)\right>$, such that the Lyapunov exponent of $v$ is zero. We consider the plane generated by $X(x)$ and $v$. The sectional expansion property implies that the area of this plane is expanded by $D\varphi_t$. However, this contradicts to the fact that the two vectors $X(x)$ and $v$ all have zero Lyapunov exponents.

The above arguments also implies that $E^{s}_{\mu}=E^{ss}$, and $F^{cu}=\left<X\right>\oplus E^{u}_{\mu}$.
%
%
\end{proof}

\subsection{Local stable manifolds and central unstable plaques}\label{ssec-localmanifold}

\subsubsection{Local stable manifolds}
One can continuously extend the bundles $E^{ss}$ and $F^{cu}$ in a neighborhood  $U$ of $\Lambda$ (see \cite[Proposition 2.7]{CrP15}). Note that the extension of  $E^{ss}$ can be made $Df$-invariant (see {the proof of} \cite[Corollary 2.8]{CrP15}\footnote{The statment of \cite[Corollary 2.8]{CrP15} does not give this statement directly. However, its proof applies. More precisely, we do not use the perturbation part of diffeomorphisms, we only use the part of the neighborhood; from the proof, we know that the dominated bundle (the weaker bundle) can be extended in a neighborhood in an invariant way for an attractor.}, but generally {we do not know how to} extend $F^{cu}$ to be an invariant bundle in $U$ . 
We still denote the extended bundles by $E^{ss}$ and $F^{cu}$.




Then, since $\Lambda$ is an attractor, one can construct a stable foliation in $U$. 
More precisely, there is  $\varepsilon_\Lambda>0$ such that for any $x\in U$, $W^{ss}_{\varepsilon_\Lambda}$ is a local embedded sub-manifold of dimension $\dim E^{ss}$ by {\cite[Theorem 4.3]{CrP15} and the proof of} \cite[Theorem 4.1]{HPS77}.
There are $C^1$ maps $\Psi^{ss}_x:~E^{ss}(x)\to F^{cu}(x)$ that are $C^1$ close to the zero maps (on a compact neighborhood of the zero section) uniformly w.r.t. $x$ and vary continuously w.r.t. $x$ in the $C^1$ topology such that 
$$W^{ss}_{\eps_{\Lambda}}(x)=\exp_x({\rm Graph}(\Psi_x^{ss})(\varepsilon_\Lambda))$$
where ${\rm Graph}(\Psi_x^{ss})(\varepsilon_\Lambda)=\{(v,\Psi_x^{ss}(v)):~v\in E^{ss}(x),~\|v\|\le\varepsilon_\Lambda\}$. Note that $C$ and $\lambda$ are the constants in the definition of the singular hyperbolicity. One can choose $\varepsilon_\Lambda$ such that every point $y\in W^{ss}_{\eps_\Lambda}$ satisfies:
$$d(\varphi_t(x),\varphi_t(y))\le 2C{\rm e}^{-\lambda t/2}, \textrm{and}~\frac{d(\varphi_t(x),\varphi_t(y))}{\|D\varphi_t|_{F(x)}\|}\le  2C{\rm e}^{-\lambda t/2}, \textrm{~~~~~~~} \forall t\ge 0.$$

More generally, one can define $W^{ss}_{\eps}(x)$ for every $\eps<\eps_{\Lambda}$ by taking the restricted graph.

These (local) strong stable manifolds generate a (global) stable foliation in a neighborhood of $\Lambda$, 
$$W^{ss}(x):=\bigcup_{t\ge 0} \varphi_{-t}W^{ss}_{\eps_{\Lambda}}(\varphi_{t}(x)).$$

\subsubsection{$cu$-plaques}
These are {locally invariant center-unstable sub-manifolds (called plaques in \cite[Theorem 5.5]{HPS77}, {or \cite[Theorem 4.5]{CrP15}}}) $W^{cu}_{loc}(x)$ tangent to $F^{cu}(x)$ and containing the image by {$\exp_{x}$} of a disk of radius $\eps_{\Lambda}$. 

More precisely,
for each point $x$, there are $C^1$ maps $\Psi^{cu}_x:~F^{cu}(x)\to E^{ss}(x)$ that are $C^1$ close to the zero maps (on a compact neighborhood of the zero section) uniformly w.r.t. $x$ and vary continuously w.r.t. $x$ in the $C^1$ topology such that
$$W^{cu}_{\eps_{\Lambda}}(x)=\exp_x({\rm Graph}(\Psi_x^{cu})(\varepsilon_\Lambda)),$$
where ${\rm Graph}(\Psi_x^{cu})(\varepsilon_\Lambda)=\{(v,\Psi_x^{cu}(v)):~v\in F^{cu}(x),~\|v\|\le\varepsilon_\Lambda\}$.

\subsubsection{Absolute continuity, local product structure and metrics for sub-manifolds}\label{sssec-absconti}
The stable foliation has the \emph{absolutely continuous} property. Given $y\in W^{ss}_{\varepsilon_\Lambda}(x)$, any two {transversals} $\Sigma_y$ at $y$ and $\Sigma_x$ at $x$ to the stable foliation, the stable foliation induces a holonomy map $h^{\Sigma_x,\Sigma_y}$. When the vector field is $C^2$, $(h^{\Sigma_x,\Sigma_y})_*{\rm Leb}_{\Sigma_x}$ is equivalent to ${\rm Leb}_{\Sigma_y}$.
Furthermore, the density function can be uniformly bounded when $\Sigma_x$ and $\Sigma_y$ are tangent to the cone field ${\mathcal C}_a^F$ for $a$ small, where at each point $x$
$${\mathcal C}_a^F=\{v\in T_x M:~v=v^s+v^c,~v^s\in E^{ss}(x),~v^c\in F^{cu}(x),~|v^s|\le a|v^c|\}.$$ 
One can see \cite[Theorem 7.1]{Pes04} for the details of the absolute continuity of the stable foliation. 
Then, the absolute continuity yields for any measurable set $A$ in a $cu$-plaque $W^{cu}_{loc}(x)$,
\begin{equation}\label{equ-controllebprod}
\frac{1}{C_{s}} \eps^{\dim E^{ss}}Leb^{cu}_{W^{cu}_{loc}(x)}(A)\le Leb(W^{ss}_{\kappa\eps}(A))\le C_{s} \eps^{\dim E^{ss}}Leb^{cu}_{W^{cu}_{loc}(x)}(A).
\end{equation} 

Roughly speaking, local stable manifolds and $cu$-plaques generate a local product structure with 
$$B(x,\eps)\approx W^{ss}_{\eps}(x)\times W^{cu}_{\eps}(x),$$
and absolutely continuity implies  a local equivalence $Leb\sim Leb^{ss}_{x}\otimes Leb^{cu}_{x}$.  

There exists some universal constant $\kappa>1$ {independent of $x$ and $\varepsilon$} such that for every small enough $\eps>0$, and for every $x$, 
\begin{enumerate}
\item[(D1)] $\disp W^{ss}_{\eps/\kappa}(B^{cu}(x,\eps/\kappa))\subset B(x,\eps)\subset W^{ss}_{\kappa\eps}(B^{cu}(x,\kappa\eps))$,
\item[(D2)] for any $y,z\in W^{ss}_\varepsilon(x)$, we have $d_{M}(y,z)/\kappa\le d^{ss}(y,z)\le \kappa d_{M}(y,z)$, where $d^{ss}$ is the metric on $W^{ss}_{\varepsilon_\Lambda}(x)$ and $d_M$ is the metric on $M$,
\item[(D3)] for any $y,z\in W^{cu}_\varepsilon(x)$, we have $d_{M}(y,z)/\kappa\le d^{cu}(y,z)\le \kappa d_{M}(y,z)$, where $d^{cu}$ is the metric on $W^{cu}_{\varepsilon_\Lambda}(x)$ and $d_M$ is the metric on $M$. 
\end{enumerate}
These properties are referred as the uniform control between the metric of $M$ and the metrics of its sub-manifolds.



R. Leplaideur\\
LMBA, UMR6205\\
Universit\'e de Brest\\
6, avenue Victor Le Gorgeu\\
C.S. 93837, France \\
\texttt{Renaud.Leplaideur@univ-brest.fr}\\
\texttt{http://pagesperso.univ-brest.fr/$\sim~$leplaide/}

D. Yang\\
School of Mathematical Sciences, Soochow University,\\
No. 1, Shizi Street, Suzhou, 215006, P.R. China\\
yangdw@suda.edu.cn

\end{document}